\numberwithin{equation}{section}
\let\Re=\undefined\DeclareMathOperator*{\Re}{Re}
\let\Im=\undefined\DeclareMathOperator*{\Im}{Im}
\newcommand{\R}{\mathbb{R}}
\newcommand{\C}{\mathbb{C}}
\newcommand{\eps}{\varepsilon}
\newtheorem{theorem}{Theorem}[section]
\newtheorem{lemma}[theorem]{Lemma}
\newtheorem{proposition}[theorem]{Proposition}
\theoremstyle{definition}
\newtheorem{remark}[theorem]{Remark}
\theoremstyle{remark}
\newcommand{\qtq}[1]{\quad\text{#1}\quad}
\def\({\left(}
\def\){\right)}
\begin{document}
\title[Focusing radial NLS]{A new proof of scattering below the ground state for the 3d radial focusing cubic NLS}
\author[B. Dodson]{Benjamin Dodson}
\address{Department of Mathematics, Johns Hopkins University}
\email{bdodson4@jhu.edu}
\author[J. Murphy]{Jason Murphy}
\address{Departments of Mathematics, UC Berkeley}
\email{murphy@math.berkeley.edu}

\begin{abstract}
We revisit the scattering result of Holmer and Roudenko \cite{HR} on the radial focusing cubic NLS in three space dimensions.  Using the radial Sobolev embedding and a virial/Morawetz estimate, we give a simple proof of scattering below the ground state that avoids the use of concentration compactness. 
\end{abstract}
\maketitle

\section{Introduction}
We consider the initial-value problem for the focusing cubic nonlinear Schr\"odinger equation (NLS) in three space dimensions:
\begin{equation}\label{nls}
\begin{cases}
(i\partial_t+\Delta) u = -|u|^2 u, \\
u(0)=u_0\in H_x^1(\R^3),
\end{cases}
\end{equation} 
where $u:\R_t\times\R_x^3\to\C$.  Solutions to \eqref{nls} conserve the \emph{mass}, defined by
\[
M(u(t)) := \int_{\R^3} |u(t,x)|^2\,dx
\]
and the \emph{energy}, defined as the sum of the kinetic and potential energies:
\[
E(u(t)) := \int_{\R^3} \tfrac12 |\nabla u(t,x)|^2 -\tfrac14|u(t,x)|^4 \,dx.
\]
We call \eqref{nls} $\dot H^{1/2}$-critical, as the $\dot H^{1/2}$-norm of initial data is invariant under the scaling that preserves the class of solutions, namely,
\[
u(t,x)\mapsto \lambda u(\lambda^2 t,\lambda x).
\]

By a solution to \eqref{nls}, we mean a function $u\in C_t H_x^1(I\times\R^3)$ on some interval $I\ni 0$ that satisfies the Duhamel formula
\[
u(t)=e^{it\Delta}u_0 +i\int_0^t e^{i(t-s)\Delta}(|u|^2 u)(s)\,ds
\]
for $t\in I$, where $e^{it\Delta}$ is the Schr\"odinger group.  If $I=\R$, we call $u$ global.  A global solution $u$ to \eqref{nls} \emph{scatters} if there exist $u_\pm\in H_x^1(\R^3)$ such that
\[
\lim_{t\to\pm\infty} \|u(t)-e^{it\Delta}u_\pm\|_{H_x^1(\R^3)}=0.
\]

Equation \eqref{nls} admits a global but {non-scattering} solution
\[
u(t,x)=e^{it}Q(x),
\] 
where $Q$ is the \emph{ground state}, i.e. the unique positive, decaying solution to the elliptic equation
\begin{equation}\label{elliptic}
-\Delta Q + Q - Q^3 = 0. 
\end{equation}

Holmer and Roudenko \cite{HR} proved the following scattering result:
\begin{theorem}\label{T} Suppose $u_0$ is radial and $M(u_0)E(u_0)<M(Q)E(Q)$. 

If $\|u_0\|_{L_x^2}\|u_0\|_{\dot H_x^1}<\|Q\|_{L_x^2} \|Q\|_{\dot H_x^1}$, then the solution to \eqref{nls} is global and scatters.
\end{theorem}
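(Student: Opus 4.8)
The plan is to follow the now-standard "virial/Morawetz without concentration compactness" strategy, which in this setting rests on three pillars: (i) a coercivity estimate coming from the sharp Gagliardo–Nirenberg inequality, valid uniformly along the flow; (ii) global well-posedness together with a scattering criterion reducing matters to a spacetime bound; and (iii) a Morawetz-type inequality, localized in space, that exploits the radial Sobolev embedding to control the nonlinear term. Let me sketch each piece.

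**First I would** record the variational characterization of $Q$. By the sharp Gagliardo–Nirenberg inequality $\|f\|_{L^4}^4 \le C_{GN}\|f\|_{L^2}\|\nabla f\|_{L^2}^3$, with optimizer $Q$, the hypothesis $M(u_0)E(u_0) < M(Q)E(Q)$ together with $\|u_0\|_{L^2}\|u_0\|_{\dot H^1} < \|Q\|_{L^2}\|Q\|_{\dot H^1}$ propagates: one shows via a continuity argument using conservation of mass and energy that $\|u(t)\|_{L^2}\|u(t)\|_{\dot H^1} \le (1-\delta)\|Q\|_{L^2}\|Q\|_{\dot H^1}$ for all $t$ in the maximal interval, for some $\delta>0$ depending on the initial gap. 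This immediately gives a uniform bound $\sup_t \|u(t)\|_{H^1} \lesssim 1$, hence global existence. It also yields the coercivity of the virial: $8\|\nabla u(t)\|_{L^2}^2 - 6\|u(t)\|_{L^4}^4 \gtrsim \|\nabla u(t)\|_{L^2}^2$ (again uniformly in $t$), using the Pohozaev relations for $Q$ to compare against the threshold.

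**Next,** to prove scattering it suffices—by a standard argument using Strichartz estimates and the $\dot H^{1/2}$-critical small-data theory—to show the global spacetime bound $\|u\|_{L_{t,x}^{5}(\R\times\R^3)} < \infty$, or some comparable $L_t^q L_x^r$ norm. The engine for this is a Morawetz/virial estimate. **The main obstacle**, and the place where the radial assumption does the work, is to find a weight $a(x)$ (a smoothed, spatially truncated version of $|x|$, truncated at scale $R$) so that the localized virial quantity $M_R(t) = 2\,\mathrm{Im}\int \bar u\,\nabla a\cdot\nabla u\,dx$ has a time derivative whose main term reproduces the coercive virial expression on the ball of radius $R$, while the error terms from the truncation region $|x|\gtrsim R$ are controlled. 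On the far region one uses the radial Sobolev embedding $\||x|u\|_{L^\infty} \lesssim \|u\|_{L^2}^{1/2}\|\nabla u\|_{L^2}^{1/2}$ (or $\||x|^{1/2}u\|_{L^\infty}\lesssim \|u\|_{H^1}$) to bound $\int_{|x|\ge R}|u|^4 \lesssim R^{-2}\|u\|_{H^1}^4$, which is summable/small for $R$ large. Since $|M_R(t)| \lesssim R\,\|u(t)\|_{H^1}^2 \lesssim R$ uniformly, integrating $\frac{d}{dt}M_R$ over a time interval $[0,T]$ and rearranging gives
\[
\int_0^T \|\nabla u(t)\|_{L^2(|x|\le R)}^2\,dt \lesssim R + T R^{-2}.
\]
Optimizing, e.g. choosing $R \sim T^{1/3}$, yields $\int_0^T \|\nabla u\|_{L^2(|x|\le R)}^2 \lesssim T^{1/3}$, i.e. an averaged smallness. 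Combined with a local-in-time mass/kinetic-energy argument (or directly with a Gagliardo–Nirenberg interpolation localized to scale $R$) this converts into the needed finite spacetime bound: one extracts a sequence of times on which $\|u(t)\|_{\dot H^{1/2}}$-type norms concentrate below the small-data threshold, applies the perturbation/stability lemma for the $\dot H^{1/2}$-critical equation, and iterates to cover all of $\R$.

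**The hard part will be** the bookkeeping in the Morawetz step—choosing the truncation profile $a$ so that the "bad" third-derivative terms $\int |\Delta a|\,|\nabla u|^2$ and $\int |\Delta^2 a|\,|u|^2$ are genuinely lower order, and ensuring the coercivity constant survives the localization (one must absorb the difference between the global virial and its restriction to $|x|\le R$ into the small far-field term). I would handle this by taking $a$ radial with $a(r)=r$ for $r\le R$, $a$ constant for $r\ge 2R$, and $a''\le 0$ everywhere, so that the negative-definite structure of the Hessian is preserved; then the error terms all carry a factor $R^{-2}$ or localize to the annulus $R\le|x|\le 2R$ where the radial decay is available. Everything else—the Strichartz/stability machinery and the variational lemma—is by now routine, so the proof is genuinely short modulo this one estimate.
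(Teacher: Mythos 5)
Your overall strategy (coercivity from sharp Gagliardo--Nirenberg, a truncated virial/Morawetz estimate using the radial Sobolev embedding, no concentration compactness) is the same as the paper's, but two steps as you describe them would not go through. First, the weight. You propose $a$ radial with $a(r)=r$ for $r\le R$, $a$ constant for $r\ge 2R$, and $a''\le 0$ everywhere. This is the wrong shape on both ends. Near the origin the coercive quantity you want, $8\|\nabla u\|_{L^2}^2-6\|u\|_{L^4}^4$ (equivalently $\|\nabla u\|^2-\tfrac34\|u\|_{L^4}^4$ up to constants), is produced only by the \emph{virial} weight $a=|x|^2$, for which $a_{jk}=2\delta_{jk}$ and $\Delta a=6$; with $a=|x|$ the identity instead yields $\int |u|^4/|x|$ plus angular-derivative terms, so your claimed main term $\int_0^T\|\nabla u\|_{L^2(|x|\le R)}^2\,dt$ does not appear. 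Worse, in the transition region a concave profile ($a''\le 0$) makes the Hessian term $4\Re\int a_{jk}\bar u_j u_k\,dx$ contribute $4\int a''\,|\partial_r u|^2$ with the \emph{unfavorable} sign, and this error involves $|\nabla u|^2$ on the annulus, which the radial Sobolev embedding cannot control (it gives pointwise decay of $|u|$, not of $|\nabla u|$). This is exactly the trap the paper's remark warns about. The fix is to keep $a$ \emph{convex}: $a=|x|^2$ for $|x|\le R/2$, $a=R|x|$ for $|x|>R$, with $\partial_r a,\partial_r^2 a\ge 0$ in between, so that the Hessian term is non-negative and can simply be discarded, and the only far-field error is $\int_{|x|>R}\tfrac{R}{|x|}|u|^4$, which the radial Sobolev embedding bounds by $R^{-2}$.

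Second, the endgame. From the averaged Morawetz bound you propose to extract times where ``$\|u(t)\|_{\dot H^{1/2}}$-type norms concentrate below the small-data threshold'' and then iterate a stability lemma to get $\|u\|_{L^5_{t,x}}<\infty$. This is a genuine gap: smallness of a spatially localized $L^4$ or kinetic-energy quantity at a sequence of times does not make any full critical norm small (the mass and kinetic energy outside the ball are order one), and a perturbative iteration ``covering all of $\R$'' would require a global approximate solution, which is precisely what concentration compactness is normally invoked to supply. The paper sidesteps this by using Tao's radial scattering criterion: it suffices that $\liminf_{t\to\infty}\int_{|x|\le R}|u(t,x)|^2\,dx$ be small for one fixed $R=R(E)$, and the Morawetz estimate (with $R\sim T^{1/3}$) plus H\"older delivers exactly that along a sequence $t_n\to\infty$. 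The proof of that criterion is itself a nontrivial Duhamel/dispersive argument, so you cannot treat the reduction to a spacetime bound as routine; you should either prove such a criterion or quote it explicitly.
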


The proof in \cite{HR} was based on the concentration compactness approach to induction on energy.  In this note, we present a simplified proof of Theorem~\ref{T} that avoids concentration compactness.  We use the radial Sobolev embedding to establish a virial/Morawetz estimate, which in turn implies `energy evacuation' as $t\to\infty$.  Together with a scattering criterion introduced by Tao \cite{Tao} (see Lemma~\ref{criterion}), this suffices to prove Theorem~\ref{T}.

\begin{remark}
 In \cite{DHR}, Theorem~\ref{T} is extended to the non-radial setting, also through the concentration compactness approach.  It remains to be seen if the non-radial case can be treated without concentration compactness.
\end{remark}

\begin{remark} The authors of \cite{HR} proved more than just scattering.  Their proof gives global space-time bounds of the form
\begin{equation}\label{stb}
\|u\|_{L_{t,x}^5(\R\times\R^3)} \leq C\bigl( M(Q)E(Q)-M(u_0)E(u_0) \bigr)
\end{equation}
for some function $C:(0,M(Q)E(Q))\to(0,\infty).$  We use a different scattering criterion introduced by Tao \cite{Tao} (Lemma~\ref{criterion}).  One also obtains space-time bounds; however, as the proof will show, the bounds depend on the profile of the initial data and not just the size.  In this sense, we prove something weaker than the result of \cite{HR}; however, our proof is much simpler.
\end{remark}

\begin{remark} The results of \cite{DHR, HR} were generalized to other dimensions and intercritical nonlinearities by \cite{CFX, Guevara}.  It should be possible to generalize our arguments as well (at least in $d\geq 3$, where the Morawetz estimates should work out).
\end{remark}

\subsection*{Acknowledgements} B.D. was supported by NSF DMS-1500424. J.M. was supported by the NSF Postdoctoral Fellowship DMS-1400706.   This work was carried out while the authors were visiting the IMA at the University of Minnesota, which provided an ideal working environment.

\section{Preliminaries}\label{S:notation} We write $A\lesssim B$ when $A$ and $B$ are non-negative quantities such that $A\leq CB$ for some $C>0$.  We indicate dependence on parameters via subscripts, e.g. $A\lesssim_u B$ indicates $A\leq CB$ for some $C=C(u)>0$.  We also use the big-oh notation $\mathcal{O}$.  We write
\[
\|f\|_{L_x^r} = \biggl(\int_{\R^3} |f(x)|^r\,dx\biggr)^{1/r},\quad \|f\|_{L_t^q L_x^r(I\times\R^3)} = \bigl\| \|f(t)\|_{L_x^r} \bigr\|_{L_t^q(I)},
\]
with the usual adjustments when $q$ or $r$ is $\infty$. We write
\[
\|f\|_{\dot H_x^1} = \|\nabla f\|_{L_x^2},\quad \|f\|_{H_x^1} = \|f\|_{L_x^2} + \|\nabla f\|_{L_x^2},\quad \|f\|_{H_x^{1,r}} = \|f\|_{L_x^r} + \|\nabla f \|_{L_x^r}.  
\]

In this note, we restrict to radial (i.e. spherically-symmetric) solutions.  The following {radial Sobolev embedding} (which may be deduced by the fundamental theorem of calculus and Cauchy--Schwarz) plays a crucial role:
\begin{lemma}[Radial Sobolev embedding] For radial $f\in H^1(\R^3)$,
\[
\| |x| f\|_{L_x^\infty} \lesssim \|f\|_{H_x^1}. 
\]
\end{lemma}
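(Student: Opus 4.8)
The plan is to reduce everything to a one-dimensional computation in the radial variable and then combine the fundamental theorem of calculus with Cauchy--Schwarz, exactly as advertised in the statement. Write $f(x)=f(r)$ with $r=|x|$. Since for radial $f\in H^1(\R^3)$ one has $|\nabla f|=|f'(r)|$ and
\[
\|f\|_{L_x^2}^2 = 4\pi\int_0^\infty \rho^2|f(\rho)|^2\,d\rho, \qquad \|f\|_{\dot H_x^1}^2 = 4\pi\int_0^\infty \rho^2|f'(\rho)|^2\,d\rho,
\]
it suffices to prove $r|f(r)|\lesssim \|f\|_{H_x^1}$ for each fixed $r>0$ and take the supremum over $r>0$ (equivalently over $x\in\R^3$). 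By density we may assume $f$ is smooth and compactly supported; the general case then follows, using that a radial $H^1$ function is absolutely continuous in $r$ on $(0,\infty)$ with $\liminf_{r\to\infty}|f(r)|^2=0$ (indeed $r^2|f(r)|^2\in L^1(dr)$, so $|f(r)|^2$ cannot stay bounded below at infinity).

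For the core estimate I would start from
\[
|f(r)|^2 = -\int_r^\infty \frac{d}{d\rho}|f(\rho)|^2\,d\rho = -2\int_r^\infty \operatorname{Re}\bigl(\bar f(\rho)f'(\rho)\bigr)\,d\rho \le 2\int_r^\infty |f(\rho)|\,|f'(\rho)|\,d\rho.
\]
Since $\rho\ge r$ throughout the integral, multiplying by $r^2$ and enlarging the domain of integration to $(0,\infty)$ gives
\[
r^2|f(r)|^2 \le 2\int_0^\infty \rho^2\,|f(\rho)|\,|f'(\rho)|\,d\rho \le 2\Bigl(\int_0^\infty \rho^2|f(\rho)|^2\,d\rho\Bigr)^{1/2}\Bigl(\int_0^\infty \rho^2|f'(\rho)|^2\,d\rho\Bigr)^{1/2},
\]
by Cauchy--Schwarz in $L^2(\rho^2\,d\rho)$. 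By the identities above, the right-hand side is a constant multiple of $\|f\|_{L_x^2}\|f\|_{\dot H_x^1}$.

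Combining the two displays yields $r^2|f(r)|^2\lesssim \|f\|_{L_x^2}\|f\|_{\dot H_x^1}\le \|f\|_{H_x^1}^2$, hence $r|f(r)|\lesssim \|f\|_{H_x^1}$, and taking the supremum over $r>0$ proves the lemma. The only step needing care is the justification of the fundamental theorem of calculus — the absolute continuity of $f$ in $r$ and the vanishing of $|f(r)|^2$ as $r\to\infty$ — which is precisely what the density reduction to smooth compactly supported $f$ takes care of; this is the sole (and minor) obstacle, the remainder being a direct computation.
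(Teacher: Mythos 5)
Your proof is correct and follows exactly the route the paper indicates (the paper gives no written proof, only the parenthetical hint that the lemma ``may be deduced by the fundamental theorem of calculus and Cauchy--Schwarz''): you write $r^2|f(r)|^2 \le 2\int_r^\infty \rho^2|f||f'|\,d\rho$ and apply Cauchy--Schwarz in $L^2(\rho^2\,d\rho)$. The density/limiting discussion justifying the fundamental theorem of calculus for general radial $H^1$ functions is a careful addition but not a deviation from the intended argument.
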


\subsection{Local theory and a scattering criterion}
The local theory for \eqref{nls} is standard.  For any $u_0\in H^1$, there exists a unique maximal-lifespan solution $u:I\times\R^3\to\C$ to \eqref{nls}.  This solution belongs to $C_t H_x^1(I\times\R^3)$ and conserves the mass and energy.  Because \eqref{nls} is $H^1$-subcritical, we have an \emph{$H^1$-blowup criterion}, namely, we can extend the solution as long as its $H^1$-norm stays bounded.  In particular, if $u$ remains uniformly bounded in $H^1$ throughout its lifespan, then it is global.  For a textbook treatment, we refer the reader to \cite{Caz}.   

In \cite{Tao}, Tao established a scattering criterion for radial solutions to \eqref{nls}.  The following appears in \cite[Theorem~1.1]{Tao}; for completeness, we include a sketch of the proof.  We assume familiarity with Strichartz estimates.

\begin{lemma}[Scattering criterion, \cite{Tao}]\label{criterion} Suppose $u:\R_t\times\R_x^3\to\C$ is a radial solution to \eqref{nls} satisfying 
\[
\|u\|_{L_t^\infty H_x^1(\R\times\R^3)} \leq E. 
\]
There exist $\eps=\eps(E)>0$ and $R=R(E)>0$ such that if
\[
\liminf_{t\to\infty} \int_{|x|\leq R} |u(t,x)|^2 \,dx \leq \eps^{2},
\]
then $u$ scatters forward in time. 
\end{lemma}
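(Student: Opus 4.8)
The strategy is Tao's: show that the smallness of the mass on a ball at some large time, combined with the uniform $H^1$ bound and Strichartz estimates, forces the free evolution $e^{i(t-T)\Delta}u(T)$ to be small in the relevant Strichartz space on $[T,\infty)$, and then run a perturbative/continuity argument to conclude that $u$ itself has finite $L_{t,x}^5$ norm on $[T,\infty)$; finite global Strichartz norms classically imply scattering. First I would fix the small parameter $\eps$ and large radius $R$ (to be chosen at the end depending on $E$) and pick, using the $\liminf$ hypothesis, a time $T$ arbitrarily large with $\int_{|x|\le R}|u(T,x)|^2\,dx\le 2\eps^2$. The key quantitative input is that this local mass bound propagates for a while: using the local virial/mass-transport identity (differentiating $\int \chi_R(x)|u(t,x)|^2\,dx$ against a cutoff $\chi_R$ of width $R$), one gets $\partial_t\int\chi_R|u|^2 = \mathcal O(\|u\|_{L_t^\infty H_x^1}^2/R)=\mathcal O(E^2/R)$, so on an interval $[T,T+T_0]$ with $T_0\sim \eps R/E^2$ (say) the local mass stays $\lesssim \eps^2$.

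Next I would write, for $t>T+T_0$, the Duhamel formula anchored at $T$ in the form
\[
e^{i(t-T)\Delta}u(T) = e^{i(t-T_0-T)\Delta}u(T+T_0) - i\int_{T}^{T+T_0} e^{i(t-s)\Delta}(|u|^2u)(s)\,ds,
\]
so that the free evolution from $T$ splits into a piece we can push forward from $T+T_0$ plus a Duhamel term over the short window $[T,T+T_0]$. The second term is estimated by dispersive decay: $\|e^{i(t-s)\Delta}(|u|^2u)(s)\|_{L_x^\infty}\lesssim |t-s|^{-3/2}\||u|^2u(s)\|_{L_x^1}\lesssim |t-s|^{-3/2}\|u(s)\|_{L_x^2}^2\|u(s)\|_{L_x^\infty}$ — here one uses $\|u(s)\|_{L_x^\infty}\lesssim_E 1$ via radial Sobolev (or just keeps it in $L_x^3$-type norms) — and since $t - s \ge t-(T+T_0)$ is bounded below, integrating over the length-$T_0$ window and interpolating against the uniform $L_x^2$ bound shows this term is small in $L_t^q L_x^r$ on $[T+2T_0,\infty)$ provided $t$ is large. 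The first term, $e^{i(t-T_0-T)\Delta}u(T+T_0)$, is handled by splitting $u(T+T_0)$ into its restriction to $|x|\le R$ and to $|x|>R$: the inside piece has $L_x^2$ norm $\lesssim\eps$ by the propagated local mass bound, hence contributes $\lesssim\eps$ after Strichartz; the outside piece has small $L_x^2$ mass... no — instead, for the outside piece one uses the radial Sobolev embedding $\||x|u\|_{L_x^\infty}\lesssim E$ to get $\|u(T+T_0)\|_{L_x^2(|x|>R)}^2 = \int_{|x|>R}|u|^2 \lesssim \int_{|x|>R}\frac{E^2}{|x|^2}\,dx = \mathcal O(E^2 R)$, which is the wrong sign; the correct route is that the outside piece, being the tail of an $H^1$ function, is small in $\dot H^{1/2}$ (hence its free evolution is small in $L_{t,x}^5$ by Strichartz) once $R$ is large — but uniformly in $T$ this requires the tail smallness of the orbit, which one gets from... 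Actually the clean argument estimates the outside contribution directly by dispersive decay as well, exploiting that $e^{it\Delta}$ applied to an $H^1$ function supported in $|x|>R$, evaluated for $t$ large, is small in $L_x^\infty$, together with the $L_x^2$ bound by $E$; summing up, one obtains
\[
\|e^{i(t-T)\Delta}u(T)\|_{L_{t,x}^5([T,\infty)\times\R^3)}\lesssim \delta(\eps,R,E),
\]
a quantity that can be made smaller than the small-data/perturbation threshold by choosing $\eps$ small and $R$ large depending on $E$.

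Finally I would invoke the standard stability theory for \eqref{nls}: once the free evolution at time $T$ has sufficiently small $L_{t,x}^5([T,\infty)\times\R^3)$ norm, a continuity (bootstrap) argument on the Duhamel iteration, using Strichartz estimates and the cubic nonlinearity, shows $\|u\|_{L_{t,x}^5([T,\infty)\times\R^3)}<\infty$; combined with the a priori $L_t^\infty H_x^1$ bound and persistence of regularity, this upgrades to $\|u\|_{L_t^\infty H_x^1\cap L_{t,x}^5([T,\infty)\times\R^3)}<\infty$, and a routine argument then produces $u_+\in H_x^1$ with $\|u(t)-e^{it\Delta}u_+\|_{H_x^1}\to0$. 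The main obstacle — and the only genuinely delicate point — is the estimate on the short-time Duhamel term $\int_T^{T+T_0}e^{i(t-s)\Delta}(|u|^2u)(s)\,ds$ over the growing window $T_0\sim\eps R/E^2$: one must check that the $|t-s|^{-3/2}$ dispersive gain, integrated over a window whose length itself grows as $R\to\infty$, still yields a net bound that is small; this works because the relevant Strichartz exponent pairing converts the pointwise decay into an integrable power and the window length enters only through the (now large but controlled) lower bound on $t-s$. Everything else is bookkeeping with Strichartz estimates and the radial Sobolev embedding from Section~\ref{S:notation}.
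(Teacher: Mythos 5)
There is a genuine gap, and it is exactly at the point you flag and then wave away: the linear term $e^{i(t-T-T_0)\Delta}u(T+T_0)$ in your decomposition. To make this small you must control $u(T+T_0)$ itself, uniformly in $T$. The interior piece is fine (small $L_x^2$ mass interpolated against the $H^1$ bound gives small $\dot H^{1/2}$ norm, hence a small Strichartz contribution), but the exterior piece $u(T+T_0)\mathbf{1}_{\{|x|>R\}}$ has $L_x^2$ norm comparable to the full mass $M(u)^{1/2}$, not small: radial Sobolev only gives the pointwise bound $|u(x)|\lesssim E/|x|$, and $\int_{|x|>R}|x|^{-2}\,dx$ diverges in three dimensions, so there is no $L_x^2$ (or $\dot H^{1/2}$) tail smallness uniform in time --- that is precisely the compactness information this whole paper is engineered to avoid. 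Your proposed rescue, ``estimate the outside contribution by dispersive decay for $t$ large,'' fails because the interval on which you need the free evolution small starts at $T+T_0$ (or $T$), where there is no time separation and hence no dispersive gain; restricting to $[T+2T_0,\infty)$ leaves the window $[T,T+2T_0]$ uncontrolled, and the continuity argument cannot be started there.

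The paper closes this hole by anchoring the Duhamel formula at $t=0$ rather than at $t=T+T_0$: it writes $e^{i(t-T)\Delta}u(T)=e^{it\Delta}u_0+F_1+F_2$, where $F_1$ integrates over the \emph{recent past} $I_1=[T-\eps^{-1/4},T]$ and $F_2$ over the distant past $[0,T-\eps^{-1/4}]$. The linear term is then $e^{it\Delta}u_0$, whose $L_t^4L_x^6([T_0,\infty))$ norm tends to zero by monotone convergence (its global norm is finite by Strichartz), so no uniform tail control of the orbit is ever needed. The local mass bound is propagated \emph{backwards} from $T$ over a window of fixed length $\eps^{-1/4}$ (with $R\gg\eps^{-9/4}$), and combined with radial Sobolev it yields $\|u\|_{L_t^\infty L_x^3(I_1\times\R^3)}\lesssim\eps^{1/2}$, which makes $F_1$ small via local-in-time Strichartz bounds; $F_2$ is small by interpolating a Strichartz bound against the $|t-s|^{-3/2}$ dispersive estimate, legitimately available there since $t-s\geq\eps^{-1/4}$ for $t\geq T$. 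Your treatment of the short-window Duhamel term and of the distant-past dispersive term is essentially sound and parallels the paper's $F_1$/$F_2$ estimates; it is the choice of anchor point for the linear evolution that must be changed for the argument to close.
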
 

\begin{proof}[Sketch of proof] Let $0<\eps<1$ and $R\geq 1$ to be chosen later.  The implicit constants below may depend on $E$.  By Sobolev embedding, Strichartz, and monotone convergence, we may choose $T_0$ large enough depending on $u_0$ so that
\[
\| e^{it\Delta}u_0\|_{L_t^4 L_x^6([T_0,\infty)\times\R^3)} <\eps. 
\]
We may further assume $T_0>\eps^{-1}$.  

By assumption, we may choose $T>T_0$ so that 
\[
\int \chi_R(x) |u(T,x)|^2\,dx\leq \eps^{2},
\]
where $\chi_R$ is a smooth cutoff to $\{|x|\leq R\}$.  Using the identity
\[
\partial_t |u|^2 =-2\nabla\cdot\Im(\bar u \nabla u),
\]
which follows from \eqref{nls}, together with integration by parts and Cauchy--Schwarz, we deduce
\[
\biggl|\partial_t \int \chi_R(x) |u(t,x)|^2\,dx\biggr| \lesssim \tfrac{1}{R}. 
\]
Thus, choosing $R\gg \eps^{-{9}/{4}}$, we find
\[
\|\chi_R u\|_{L_t^\infty L_x^2(I_1\times\R^3)} \lesssim \eps,\qtq{where}I_1 = [T-\eps^{-\frac14},T].
\]
Using H\"older's inequality, Sobolev embedding, and radial Sobolev embedding, we deduce
\begin{align*}
\|u\|_{L_t^\infty L_x^3} & \lesssim \|\chi_R u\|_{L_t^\infty L_x^2}^{\frac12} \|u\|_{L_t^\infty L_x^6}^{\frac12} + \|(1-\chi_R) u\|_{L_{t,x}^\infty}^{\frac13} \|u\|_{L_t^\infty L_x^2}^{\frac23} \lesssim \eps^{\frac12} + R^{-\frac13} \lesssim \eps^{\frac12},
\end{align*}
where all space-time norms are over $I_1\times\R^3$. 

We next use the Duhamel formula to write
\begin{align*}
e^{i(t-T)\Delta}u(T) = e^{it\Delta}u_0 + F_1(t) + F_2(t), \end{align*}
where
\[
F_j(t):= i\int_{I_j} e^{i(t-s)\Delta}(|u|^2 u)(s)\,ds\qtq{and} I_2=[0,T-\eps^{-\frac14}].
\]

It is not difficult to prove by a continuity argument, Sobolev embedding, and Strichartz estimates, that
\[
\| u \|_{L_t^2 L_x^\infty(I\times\R^3)} + \|u\|_{L_t^2 H_x^{1,6}(I\times\R^3)} \lesssim (1+|I|)^{1/2} 
\]
for any interval $I$.  Thus, by Sobolev embedding and Strichartz, 
\[
\|F_1\|_{L_t^4 L_x^6([T,\infty)\times\R^3)}  \lesssim \int_{I_1} \| (|u|^2 u)(s)\|_{H_x^1} \,ds  \lesssim \| u\|_{L_t^\infty L_x^3} \|u\|_{L_t^2 L_x^\infty} \|u\|_{L_t^2 H_x^{1,6}} \lesssim \eps^{\frac14},
\]
where the final three space-time norms are over $I_1\times\R^3$.  

Next, by H\"older's inequality,
\[
\| F_2\|_{L_t^4 L_x^6([T,\infty)\times\R^3)} \lesssim \|F_2\|_{L_t^4 L_x^3([T,\infty)\times\R^3)}^{\frac12}\|F_2\|_{L_t^4 L_x^\infty([T,\infty)\times\R^3)}^{\frac12}.
\]
Noting that
\[
F_2(t) = e^{i(t-T+\eps^{-1/4})\Delta}[u(T-\eps^{-1/4})-u_0], 
\]
we can first use Strichartz to estimate
\[
\|F_2\|_{L_t^4 L_x^3([T,\infty)\times\R^3)} \lesssim 1. 
\]
On the other hand, by the $L_x^1\to L_x^\infty$ dispersive estimate and Sobolev embedding,
\[
\| F_2(t)\|_{L_x^\infty} \lesssim \int_{I_2} |t-s|^{-\frac32} \|u\|_{L_t^\infty H_x^1}^3 \,ds \lesssim (t-T+\eps^{-\frac14})^{-\frac12}. 
\]
Thus 
\[
\|F_2\|_{L_t^4 L_x^\infty([T,\infty)\times\R^3)} \lesssim  \eps^{\frac1{16}}, \qtq{whence} \|F_2\|_{L_t^4 L_x^6([T,\infty)\times\R^3)}\lesssim \eps^{\frac{1}{32}}. 
\]

Collecting the estimates above, we find
\[
\| e^{i(t-T)\Delta} u(T)\|_{L_t^4 L_x^6([T,\infty)\times\R^3)}\lesssim \eps^{\frac{1}{32}}. 
\]
Choosing $\eps$ sufficiently small, one can then use a continuity argument to deduce
\[
\|u\|_{L_t^4 L_x^6([T,\infty)\times\R^3)}\lesssim \eps^{\frac{1}{32}}.
\]
By standard arguments, such a bound suffices to establish scattering.\end{proof}

\subsection{Variational analysis}\label{S:variational} We briefly review some of the variational analysis related to the ground state $Q$.  For more details, see \cite{HR, Weinstein}.

The ground state $Q$ optimizes the sharp Gagliardo--Nirenberg inequality:
\[
\| f\|_{L_x^4}^4 \leq C_0 \| f\|_{L_x^2} \|f\|_{\dot H_x^1}^3. 
\]
The Pohozaev identities for $Q$ (which arise from multiplying \eqref{elliptic} by $Q$ and $x\cdot\nabla Q$)  imply that
\begin{equation}\label{poho}
 \|Q\|_{L_x^2}\|Q\|_{\dot H_x^1} = \tfrac43C_0^{-1}\qtq{and} M(Q)E(Q)=\tfrac{8}{27}C_0^{-2}.
\end{equation}

\begin{lemma}[Coercivity I] If $M(u_0)E(u_0)<(1-\delta)M(Q)E(Q)$ and $\|u_0\|_{L_x^2}\|u_0\|_{\dot H_x^1} \leq \|Q\|_{L_x^2}\|Q\|_{\dot H_x^1}$, then there exists $\delta'=\delta'(\delta)>0$ so that
\[
\|u(t)\|_{L_x^2}\|u(t)\|_{\dot H_x^1} <(1-\delta')\|Q\|_{L_x^2}\|Q\|_{\dot H_x^1}
\]
for all $t\in I$, where $u:I\times\R^3\to\C$ is the maximal-lifespan solution to \eqref{nls}.  In particular, $I=\R$ and $u$ is uniformly bounded in $H^1$. 
\end{lemma}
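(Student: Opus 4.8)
The plan is the standard ``energy trapping'' argument. Write $M := M(u_0)$ and $E := E(u_0)$; these are conserved along the flow. Since $u \in C_t H_x^1(I\times\R^3)$, the scaling-invariant quantity
\[
w(t) := \|u(t)\|_{L_x^2}^2\|u(t)\|_{\dot H_x^1}^2 = M\|\nabla u(t)\|_{L_x^2}^2
\]
depends continuously on $t \in I$. Substituting the sharp Gagliardo--Nirenberg inequality $\|u(t)\|_{L_x^4}^4 \le C_0\|u(t)\|_{L_x^2}\|u(t)\|_{\dot H_x^1}^3$ into the energy identity and multiplying through by $M$ gives
\[
ME \ge \phi\bigl(w(t)\bigr),\quad t\in I, \qtq{where} \phi(w) := \tfrac12 w - \tfrac14 C_0\,w^{3/2}.
\]
An elementary computation shows $\phi$ is strictly increasing on $[0,w_*]$ and strictly decreasing on $[w_*,\infty)$, where $w_* := \tfrac{16}{9}C_0^{-2}$ is its unique interior critical point; and the Pohozaev identities \eqref{poho} identify $w_* = \|Q\|_{L_x^2}^2\|Q\|_{\dot H_x^1}^2$ and $\phi(w_*) = M(Q)E(Q)$. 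In other words, $\phi$ attains its maximum precisely at the ground-state value, and the maximum value is exactly $M(Q)E(Q)$.

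Next I would carry out the trapping. The hypothesis $\|u_0\|_{L_x^2}\|u_0\|_{\dot H_x^1} \le \|Q\|_{L_x^2}\|Q\|_{\dot H_x^1}$ says $w(0) \le w_*$; moreover $w(0) \ne w_*$, since otherwise $\phi(w(0)) = \phi(w_*) = M(Q)E(Q) > ME$, contradicting $\phi(w(0)) \le ME$. The same contradiction shows $w(t)\ne w_*$ for every $t\in I$, so because $w$ is continuous and $I$ is an interval, $w(t)$ never leaves the component $[0,w_*)$ of $\{w\ge 0 : w\ne w_*\}$ containing $w(0)$; that is, $w(t) < w_*$ throughout $I$.

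To extract the quantitative gap, I would use that $\phi$ restricts to a continuous, strictly increasing bijection of $[0,w_*]$ onto $[0,\phi(w_*)]$ (we may assume $0<\delta<1$, as otherwise the hypotheses are vacuous). Setting $w_\delta := \phi^{-1}\bigl((1-\delta)\phi(w_*)\bigr) < w_*$, the bounds $\phi(w(t)) \le ME < (1-\delta)M(Q)E(Q) = (1-\delta)\phi(w_*)$ together with $w(t) < w_*$ force $w(t) < w_\delta$ for all $t\in I$; hence $\|u(t)\|_{L_x^2}\|u(t)\|_{\dot H_x^1} = w(t)^{1/2} < (1-\delta')\|Q\|_{L_x^2}\|Q\|_{\dot H_x^1}$, where $\delta' := 1 - (w_\delta/w_*)^{1/2} > 0$ depends only on $\delta$. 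Global existence and the uniform $H_x^1$ bound are then immediate (the case $u_0 = 0$ being trivial): $\|u(t)\|_{L_x^2}$ is conserved and $\|u(t)\|_{\dot H_x^1}$ is bounded by the above, so $\sup_{t\in I}\|u(t)\|_{H_x^1}<\infty$, and the $H^1$-blowup criterion recalled in Section~\ref{S:notation} forces $I=\R$.

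I expect the only step requiring genuine care to be the trapping itself — ruling out that $w(t)$ ``tunnels through'' the barrier at $w_*$ — which hinges precisely on the continuity of $t\mapsto w(t)$ (equivalently, on $u\in C_tH_x^1$) and on $I$ being connected; everything else is bookkeeping with the explicit function $\phi$.
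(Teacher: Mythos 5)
Your proposal is correct and follows essentially the same route as the paper: both substitute the sharp Gagliardo--Nirenberg inequality into the conserved quantity $M(u)E(u)$, identify via the Pohozaev identities that the resulting function of $\|u(t)\|_{L_x^2}\|u(t)\|_{\dot H_x^1}$ peaks exactly at the ground-state value with peak $M(Q)E(Q)$, and then run a continuity/connectedness trapping argument to keep this quantity quantitatively below the peak. Your $\phi(w)=\tfrac12 w-\tfrac14 C_0 w^{3/2}$ is just the paper's normalized cubic $3y^2-2y^3$ in the variable $w=y^2\,\|Q\|_{L_x^2}^2\|Q\|_{\dot H_x^1}^2$, so the two write-ups differ only in bookkeeping.
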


\begin{proof} By the sharp Gagliardo--Nirenberg inequality and the conservation of mass and energy,
\[
(1-\delta)M(Q)E(Q) \geq M(u)E(u) \geq \tfrac12\|u(t)\|_{L_x^2}^2 \|u(t)\|_{\dot H_x^1}^2 -\tfrac14 C_0\|u(t)\|_{L_x^2}^3\|u(t)\|_{\dot H_x^1}^3
\]
for $t\in I$. Using \eqref{poho}, this becomes
\[
1-\delta \geq 3\biggl(\frac{\|u(t)\|_{L_x^2} \|u(t)\|_{\dot H_x^1}}{\|Q\|_{L_x^2}\|Q\|_{\dot H_x^1}}\biggr)^2 - 2\biggl(\frac{\|u(t)\|_{L_x^2} \|u(t)\|_{\dot H_x^1}}{\|Q\|_{L_x^2}\|Q\|_{\dot H_x^1}}\biggr)^3.
\]
Using a continuity argument and the fact that
\[
1-\delta \geq 3y^2 -2y^3 \implies |y-1|\geq \delta' \qtq{for some} \delta'>0,
\]
the first statement of the lemma follows. The second statement follows by noting that the $L^2$-norm is conserved and recalling the $H^1$ blowup criterion. \end{proof}

\begin{lemma}[Coercivity II]\label{coercive2} Suppose $\|f\|_{L_x^2}\|f\|_{\dot H_x^1}<(1-\delta)\|Q\|_{L_x^2}\|Q\|_{\dot H_x^1}$.  Then there exists $\delta'=\delta'(\delta)>0$ so that
\[
\|f\|_{\dot H_x^1}^2 - \tfrac34 \|f\|_{L_x^4}^4 \geq \delta' \|f\|_{L_x^4}^4. 
\]
\end{lemma}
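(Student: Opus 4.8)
The plan is to reduce the inequality to a scalar statement about the quantity $y = \frac{\|f\|_{L_x^2}\|f\|_{\dot H_x^1}}{\|Q\|_{L_x^2}\|Q\|_{\dot H_x^1}}$, exactly as in the proof of Coercivity I, and then to extract a quantitative gap. First I would apply the sharp Gagliardo--Nirenberg inequality $\|f\|_{L_x^4}^4 \leq C_0\|f\|_{L_x^2}\|f\|_{\dot H_x^1}^3$ to bound the subtracted term, so that
\[
\|f\|_{\dot H_x^1}^2 - \tfrac34\|f\|_{L_x^4}^4 \geq \|f\|_{\dot H_x^1}^2 - \tfrac34 C_0 \|f\|_{L_x^2}\|f\|_{\dot H_x^1}^3 = \|f\|_{\dot H_x^1}^2\bigl(1 - \tfrac34 C_0 \|f\|_{L_x^2}\|f\|_{\dot H_x^1}\bigr).
\]
Using the first Pohozaev identity in \eqref{poho}, namely $\|Q\|_{L_x^2}\|Q\|_{\dot H_x^1} = \tfrac43 C_0^{-1}$, the factor in parentheses is exactly $1 - y$, so the left-hand side is at least $(1-y)\|f\|_{\dot H_x^1}^2$. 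Since $y < 1-\delta$ by hypothesis, we get $1 - y > \delta$, hence
\[
\|f\|_{\dot H_x^1}^2 - \tfrac34\|f\|_{L_x^4}^4 \geq \delta\,\|f\|_{\dot H_x^1}^2.
\]

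Next I would convert this lower bound in terms of $\|f\|_{\dot H_x^1}^2$ into one in terms of $\|f\|_{L_x^4}^4$, which is what the statement asks for. From the display above (keeping a fraction $\theta\in(0,1)$ of the gap in reserve) one has $\tfrac34\|f\|_{L_x^4}^4 \leq (1-\delta)\|f\|_{\dot H_x^1}^2$, so $\|f\|_{\dot H_x^1}^2 \geq \tfrac{3}{4(1-\delta)}\|f\|_{L_x^4}^4$; feeding a portion of this back into $\delta\|f\|_{\dot H_x^1}^2$ yields
\[
\|f\|_{\dot H_x^1}^2 - \tfrac34\|f\|_{L_x^4}^4 \geq \delta\,\|f\|_{\dot H_x^1}^2 \geq \tfrac{3\delta}{4(1-\delta)}\,\|f\|_{L_x^4}^4,
\]
so the lemma holds with $\delta' = \tfrac{3\delta}{4(1-\delta)}$ (any smaller positive constant works equally well, and one may prefer to split the gap to make the two bounds simultaneously true — but the inequality chain above already does this cleanly). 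One should note the degenerate case $f\equiv 0$ is trivial.

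I do not expect a serious obstacle here: the argument is a one-line application of sharp Gagliardo--Nirenberg followed by the Pohozaev normalization \eqref{poho}, and the only mildly delicate point is bookkeeping — making sure the constant $\delta'$ is genuinely positive and depends only on $\delta$, and correctly passing from the coercivity bound with $\|\nabla f\|_{L_x^2}^2$ on the right to one with $\|f\|_{L_x^4}^4$ on the right without accidentally consuming the entire spectral gap. This is handled by the two-step estimate above, where the first inequality uses only part of the $\delta$-room and the second recycles the remainder.
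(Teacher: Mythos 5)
Your proof is correct and is essentially the paper's argument: both reduce, via the sharp Gagliardo--Nirenberg inequality and the Pohozaev normalization $\|Q\|_{L_x^2}\|Q\|_{\dot H_x^1}=\tfrac43 C_0^{-1}$, to the bound $\|f\|_{\dot H_x^1}^2-\tfrac34\|f\|_{L_x^4}^4\geq\delta\|f\|_{\dot H_x^1}^2$ and then convert it to an $L_x^4$ lower bound with the same constant $\delta'=\tfrac{3\delta}{4(1-\delta)}$. The paper merely routes the first step through the identity $\|f\|_{\dot H_x^1}^2-\tfrac34\|f\|_{L_x^4}^4=3E(f)-\tfrac12\|f\|_{\dot H_x^1}^2$ and a lower bound on $E(f)$, which is an algebraic rearrangement of your direct application of Gagliardo--Nirenberg; your aside about reserving a fraction $\theta$ of the gap is unnecessary, since the chain you wrote already closes without splitting.
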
 

\begin{proof} Write
\[
\|f\|_{\dot H_x^1}^2 - \tfrac34 \|f\|_{L_x^4}^4 = 3E(f) - \tfrac12\|f\|_{\dot H_x^1}^2.
\]
By the sharp Gagliardo--Nirenberg inequality and \eqref{poho}, 
\begin{align*}
E(f)&\geq \tfrac12\|f\|_{\dot H_x^1}^2[1-\tfrac12 C_0\|f\|_{L_x^2}\|f\|_{\dot H_x^1} ] \\
&\geq \tfrac12\|f\|_{\dot H_x^1}^2[1-\tfrac12(1-\delta)C_0\|Q\|_{L_x^2}\|Q\|_{\dot H_x^1}] \geq (\tfrac16+\tfrac{\delta}{3})\|f\|_{\dot H_x^1}^2. 
\end{align*}
Thus
\[
\|f\|_{\dot H_x^1}^2 - \tfrac34 \|f\|_{L_x^4}^4 \geq \delta \|f\|_{\dot H_x^1}^2,\qtq{which implies}
\|f\|_{\dot H_x^1}^2 - \tfrac34\|f\|_{L_x^4}^4 \geq \tfrac{3\delta}{4(1-\delta)}\|f\|_{L_x^4}^4,
\]
as desired. \end{proof}

%%%%%%%%%%%%%%
\section{Proof of Theorem~\ref{T}}  Throughout this section, we suppose $u$ a solution to \eqref{nls} satisfying the hypotheses of Theorem~\ref{T}.  In particular, using the results of Section~\ref{S:variational}, we have that $u$ is global and uniformly bounded in $H^1$, and there exists $\delta>0$ such that
\begin{equation}\label{below}
\sup_{t\in\R}\|u(t)\|_{L_x^2}\|u(t)\|_{\dot H_x^1} < (1-2\delta)\|Q\|_{L_x^2} \|Q\|_{\dot H_x^1}. 
\end{equation}

We will prove that the potential energy of $u$ escapes to spatial infinity as $t\to\infty$.  The same is true for the kinetic energy, but we do not need that here.
\begin{proposition}[Energy evacuation]\label{pee} There exists a sequence of times $t_n\to\infty$ and a sequence of radii $R_n\to\infty$ such that
\[
\lim_{n\to\infty} \int_{|x|\leq R_n} |u(t_n,x)|^4\,dx = 0.
\]
\end{proposition}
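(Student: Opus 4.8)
The plan is to establish a virial/Morawetz-type estimate on truncated regions of space, exploiting the fact that the solution is uniformly bounded in $H^1$ and satisfies the subcritical bound \eqref{below}. The key quantity to introduce is a weighted virial functional of the form
\[
M_R(t) := 2\int_{\R^3} \nabla\phi_R(x)\cdot\Im(\bar u\nabla u)(t,x)\,dx,
\]
where $\phi_R(x) = R^2\phi(x/R)$ for a radial function $\phi$ that agrees with $\tfrac12|x|^2$ for $|x|\leq 1$ and is constant for $|x|\geq 2$, chosen so that $\nabla^2\phi \geq 0$ in a suitable sense and $\Delta^2\phi$ is well controlled. First I would record that $|M_R(t)|\lesssim_u R$ for all $t$, using Cauchy--Schwarz and the uniform $H^1$ bound; this follows since $|\nabla\phi_R|\lesssim R$.

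Next I would compute $\frac{d}{dt}M_R(t)$. Using \eqref{nls} and integration by parts, the time derivative produces the standard virial terms: a positive multiple of a Hessian term $\int \nabla^2\phi_R(\nabla u,\overline{\nabla u})$, a potential term coming from the nonlinearity, and error terms supported in the annulus $R\leq |x|\leq 2R$ involving $\Delta^2\phi_R$ and $|\nabla\phi_R|$. On the ball $|x|\leq R$, where $\phi_R = \tfrac12|x|^2$, the identity reproduces (up to constants) the flat virial expression $\|\nabla u\|_{L_x^2}^2 - \tfrac34\|u\|_{L_x^4}^4$, which Lemma~\ref{coercive2} bounds below by $\delta'\|u\|_{L_x^4(|x|\leq R)}^4$ (after arranging \eqref{below} to feed into the hypothesis of that lemma). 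The contributions from the annulus are the crux: the Laplacian-squared term is $\mathcal{O}(R^{-2}\|u\|_{L_x^2(|x|\sim R)}^2)$, hence $\mathcal{O}(R^{-2})$, while the genuinely dangerous term is the nonlinear piece on the annulus, of size $\mathcal{O}(\int_{|x|\sim R}|u|^4)$. Here the radial Sobolev embedding rescues us: $\||x|u\|_{L_x^\infty}\lesssim_u 1$ gives $|u(x)|^4 \lesssim_u |x|^{-2}|u(x)|^2$ on the annulus, so $\int_{|x|\sim R}|u|^4 \lesssim_u R^{-2}\|u\|_{L_x^2}^2 \lesssim_u R^{-2}$. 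Altogether one obtains, for $R$ large enough depending on $\delta$ and $u$,
\[
\frac{d}{dt}M_R(t) \gtrsim_{u,\delta} \int_{|x|\leq R}|u(t,x)|^4\,dx - \mathcal{O}_u(R^{-2}).
\]

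To conclude I would run a standard averaging argument. Integrating the displayed inequality over $[0,T]$ and using $|M_R(T)-M_R(0)|\lesssim_u R$, we get
\[
\frac{1}{T}\int_0^T \int_{|x|\leq R}|u(t,x)|^4\,dx\,dt \lesssim_u \frac{R}{T} + R^{-2}.
\]
Choosing $T = T(R) = R^3$, say, the right-hand side is $\mathcal{O}_u(R^{-2})$, so the time-average of $\int_{|x|\leq R}|u|^4$ over $[0,R^3]$ tends to $0$ as $R\to\infty$. Hence for each $R = R_n\to\infty$ there exists $t_n\in[0,R_n^3]$ with $\int_{|x|\leq R_n}|u(t_n,x)|^4\,dx \to 0$, and one may arrange $t_n\to\infty$ by passing to a subsequence (or by restricting the averaging interval to $[R_n,R_n^3]$, whose length is still comparable to $R_n^3$). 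This yields the sequences claimed in the proposition. I expect the main obstacle to be the bookkeeping of the annulus error terms in $\frac{d}{dt}M_R$ and verifying that the Hessian term has a favorable sign on the annulus (or at least that its bad part is absorbable), which is precisely where the radial Sobolev embedding and the choice of $\phi$ must be used carefully; the rest is routine.
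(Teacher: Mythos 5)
Your overall strategy---differentiate a truncated virial functional, extract coercivity on the ball via Lemma~\ref{coercive2}, control the annulus with the radial Sobolev embedding, and then average in time with $T\sim R^3$---is the same as the paper's, and the averaging step at the end is fine. But there is a genuine gap in the choice of weight, and it sits exactly where you flagged uncertainty. You take $\phi_R(x)=R^2\phi(x/R)$ with $\phi$ equal to $\tfrac12|x|^2$ for $|x|\leq 1$ and \emph{constant} for $|x|\geq 2$. Then $\partial_r\phi_R$ must decrease from $R$ at $|x|=R$ to $0$ at $|x|=2R$, so $\partial_r^2\phi_R$ is negative, of size $\sim 1$, on a set of measure comparable to the annulus. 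The Hessian term $4\Re\,(\phi_R)_{jk}\bar u_j u_k$ therefore contributes a term of the form $-c\int_{|x|\sim R}|\partial_r u|^2\,dx$ with the wrong sign. This is \emph{not} one of your $\mathcal{O}(R^{-2})$ errors: the radial Sobolev embedding converts $|u|^4$ into $|x|^{-2}|u|^2$ on the annulus, but it gives you nothing for $|\nabla u|^2$ there, and the only a priori bound is the uniform $H^1$ bound, i.e.\ $\mathcal{O}(1)$. An $\mathcal{O}(1)$ negative error swamps the main term $\int_{|x|\leq R}|u|^4$, so the inequality $\frac{d}{dt}M_R\gtrsim \int_{|x|\leq R}|u|^4-\mathcal{O}(R^{-2})$ does not follow from your setup. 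Making $\int_{|x|>R}|\nabla u(t)|^2\,dx$ small uniformly in $t$ would require precompactness of the orbit in $H^1$, which is precisely the concentration-compactness input this argument is designed to avoid.

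The repair is the one the paper uses (following Ogawa--Tsutsumi): instead of flattening the weight to a constant, patch the virial weight $|x|^2$ to the \emph{Morawetz} weight $R|x|$ for $|x|>R$, imposing $\partial_r a\geq 0$ and $\partial_r^2 a\geq 0$ in the transition region. For $a=R|x|$ the Hessian is $a_{jk}=\tfrac{R}{|x|}\bigl[\delta_{jk}-\tfrac{x_jx_k}{|x|^2}\bigr]$, which is the (non-negative) projection onto angular directions; the resulting term $\tfrac{4R}{|x|}|\slashed{\nabla}u|^2$ vanishes identically for radial $u$, and $\Delta\Delta a=0$ there. With this weight every annulus term is either non-negative or genuinely $\mathcal{O}(R^{-2})$ after the radial Sobolev embedding, and your averaging argument then closes exactly as you wrote it. One smaller point: to feed $\chi_R u$ into Lemma~\ref{coercive2} you need the constraint \eqref{below} to survive the cutoff; this requires the identity $\int\chi_R^2|\nabla u|^2=\int|\nabla(\chi_R u)|^2+\chi_R\Delta(\chi_R)|u|^2$, which costs another harmless $\mathcal{O}(R^{-2}M(u))$ (this is Lemma~\ref{coercive-ball} in the paper), so your parenthetical ``after arranging \eqref{below}'' does go through, but it should be made explicit.
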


Using Proposition~\ref{pee}, we can quickly prove Theorem~\ref{T}.  We only consider the case of scattering forward in time, as the other case is similar.

\begin{proof}[Proof of Theorem~\ref{T}] By Section~\ref{S:variational}, $u$ is global and uniformly bounded in $H^1$.  Fix $\eps$ and $R$ as in Lemma~\ref{criterion}.  Now take $t_n\to\infty$ and $R_n\to\infty$ as in Proposition~\ref{pee}.  Then, choosing $n$ large enough that $R_n\geq R$, H\"older's inequality gives
\[
\int_{|x|\leq R} |u(t_n,x)|^2 \,dx \lesssim R^{\frac32} \biggl(\int_{|x|\leq R_n} |u(t_n,x)|^4\,dx \biggr)^{\frac12} \to 0\qtq{as}n\to\infty. 
\]
In particular, Lemma~\ref{criterion} implies that $u$ scatters forward in time. \end{proof}

We prove Proposition~\ref{pee} by a virial/Morawetz estimate.  We use the virial weight in a large ball around the origin, exploiting \eqref{below} and coercivity to get a suitable lower bound.  Spatial truncation is necessary because our solutions are merely in $H^1$.  The large-radii terms will be treated as error terms.  One needs some compactness to deduce estimates that are uniform in time.  Holmer and Roudenko \cite{HR} employed concentration compactness to reduce to the study of solutions with pre-compact orbit in $H^1$.  We will instead employ the radial Sobolev embedding and use the standard Morawetz weight at large radii.

We first need a lemma that gives \eqref{below} on sufficiently large balls, so that we can exhibit the necessary coercivity. We define $\chi$ to be a smooth cutoff to the set $\{|x|\leq 1\}$ and set $\chi_R(x) := \chi(\frac{x}{R})$ for $R>0$. 

\begin{lemma}[Coercivity on balls]\label{coercive-ball} There exists $R=R(\delta,M(u),Q)>0$ sufficiently large that
\[
\sup_{t\in\R} \|\chi_R u(t)\|_{L_x^2} \|\chi_R u(t)\|_{\dot H_x^1} < (1-\delta)\|Q\|_{L_x^2} \|Q\|_{\dot H_x^1},
\]
In particular, by Lemma~\ref{coercive2}, there exists $\delta'=\delta'(\delta)>0$ so that
\[
\|\chi_R u(t)\|_{\dot H_x^1}^2 - \tfrac34 \|\chi_R u(t)\|_{L_x^4}^4 \geq \delta' \|\chi_R u(t)\|_{L_x^4}^4
\]
uniformly for $t\in\R$. 
\end{lemma}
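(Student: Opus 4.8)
The plan is to deduce the inequality on balls from the global bound \eqref{below} by showing that truncation by $\chi_R$ costs at most an error that vanishes as $R\to\infty$, uniformly in $t$, thanks to the uniform $H^1$ bound and the radial Sobolev embedding. First I would observe that $\|\chi_R u(t)\|_{L_x^2}\le\|u(t)\|_{L_x^2}$ trivially, so the only real work is to control $\|\chi_R u(t)\|_{\dot H_x^1}$ from above in terms of $\|u(t)\|_{\dot H_x^1}$ up to a small error. Expanding $\nabla(\chi_R u)=\chi_R\nabla u+(\nabla\chi_R)u$ and using that $\|\nabla\chi_R\|_{L_x^\infty}\lesssim R^{-1}$, one gets
\[
\|\chi_R u(t)\|_{\dot H_x^1}^2 \le \|u(t)\|_{\dot H_x^1}^2 + \mathcal{O}\bigl(R^{-1}\|u(t)\|_{\dot H_x^1}\|u(t)\|_{L_x^2} + R^{-2}\|u(t)\|_{L_x^2}^2\bigr),
\]
and by the uniform $H^1$ bound the error is $\mathcal{O}(R^{-1})$ uniformly in $t$. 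Here, however, one must be slightly more careful: to beat the \emph{product} of the two norms one wants the error on the product $\|\chi_R u\|_{L_x^2}\|\chi_R u\|_{\dot H_x^1}$ to be genuinely small compared to the gap $\delta\cdot\|Q\|_{L_x^2}\|Q\|_{\dot H_x^1}$, so I would actually exploit the radial Sobolev embedding to do better: on the annulus $|x|\gtrsim R$ we have $|u(t,x)|\lesssim |x|^{-1}\|u(t)\|_{H_x^1}\lesssim R^{-1}$, which gives sharper decay for the tails of both the $L^2$ and the $\dot H^1$ pieces if needed.

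The cleanest route is to work directly with the product. Since $\|\chi_R u(t)\|_{L_x^2}\le \|u(t)\|_{L_x^2}$ and
\[
\|\chi_R u(t)\|_{\dot H_x^1} \le \|u(t)\|_{\dot H_x^1} + C R^{-1}\|u(t)\|_{L_x^2} \le \|u(t)\|_{\dot H_x^1} + CR^{-1}M(u)^{1/2},
\]
multiplying and using \eqref{below} in the form $\sup_t \|u(t)\|_{L_x^2}\|u(t)\|_{\dot H_x^1} < (1-2\delta)\|Q\|_{L_x^2}\|Q\|_{\dot H_x^1}$ yields
\[
\|\chi_R u(t)\|_{L_x^2}\|\chi_R u(t)\|_{\dot H_x^1} < (1-2\delta)\|Q\|_{L_x^2}\|Q\|_{\dot H_x^1} + CR^{-1}M(u)
\]
uniformly in $t$, where $C$ depends only on $\chi$. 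Choosing $R=R(\delta,M(u),Q)$ large enough that $CR^{-1}M(u) < \delta\|Q\|_{L_x^2}\|Q\|_{\dot H_x^1}$ gives the claimed bound with $(1-\delta)$ on the right-hand side. The second displayed inequality is then immediate: the first part shows $\chi_R u(t)$ satisfies the hypothesis of Lemma~\ref{coercive2} with $\delta$ in place of its $\delta$, so Lemma~\ref{coercive2} applies with a uniform $\delta'=\delta'(\delta)$ and gives $\|\chi_R u(t)\|_{\dot H_x^1}^2 - \tfrac34\|\chi_R u(t)\|_{L_x^4}^4 \ge \delta'\|\chi_R u(t)\|_{L_x^4}^4$ for all $t$.

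The main obstacle — really the only subtlety — is making sure the error from the cutoff is controlled \emph{uniformly in $t$} and that it genuinely enters as an additive term on the product rather than multiplicatively in a way that could be amplified; the uniform $H^1$ bound from the Coercivity I lemma is exactly what makes this work, and one should double-check that $\|\nabla\chi_R u\|_{L_x^2}$ (the dangerous term) is bounded by $R^{-1}\|\nabla\chi\|_{L_x^\infty}\|u\|_{L_x^2}$ and hence is $\mathcal{O}(R^{-1})$ with a constant independent of $t$. No concentration compactness is needed here; everything is elementary once \eqref{below} and the uniform $H^1$ bound are in hand.
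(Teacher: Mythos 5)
Your proof is correct and follows essentially the same route as the paper: bound the $L^2$ factor trivially by $\|u(t)\|_{L_x^2}$, show the cutoff costs only an $o_R(1)$ error in the $\dot H^1$ factor with constant depending only on $M(u)$, and take $R$ large. The only (immaterial) difference is that the paper controls $\|\chi_R u\|_{\dot H_x^1}^2$ via the integration-by-parts identity \eqref{ibp}, yielding an $\mathcal{O}(R^{-2}M(u))$ error, whereas you use the Leibniz rule and triangle inequality to get an $\mathcal{O}(R^{-1}M(u)^{1/2})$ error on the norm itself; both suffice.
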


\begin{proof} First note that 
\[
\| \chi_R u(t)\|_{L_x^2} \leq \|u(t)\|_{L_x^2}
\]
uniformly for $t\in\R$.  Thus, it suffices to consider the $\dot H^1$ term.  For this, we will make use the following identity, which can be checked by direct computation:
\begin{equation}\label{ibp}  
\int \chi_R^2 |\nabla u|^2 \,dx = \int |\nabla (\chi_R u)|^2 + \chi_R\Delta(\chi_R) |u|^2 \,dx. 
\end{equation}
In particular,
\[
\|\chi_R u\|_{\dot H_x^1}^2 \leq \|u\|_{\dot H_x^1}^2 + \mathcal{O}\bigl( \tfrac{1}{R^2} M(u)\bigr). 
\]
Choosing $R$ sufficiently large depending on $\delta,$ $M(u)$ and $Q$, the result follows. \end{proof}

To prove our virial/Morawetz estimate, we will use the following general identity, which follows by computing directly using \eqref{nls}. 

\begin{lemma}[Morawetz identity]\label{morid} Let $a:\R^3\to\R$ be a smooth weight.  Define
\[
M(t) = 2\Im \int \bar u \nabla u \cdot \nabla a \,dx.
\]
Then 
\[
\tfrac{d}{dt} M(t) = \int- |u|^{4} \Delta a + |u|^2(-\Delta\Delta a) + 4\Re a_{jk}\bar u_j u_k\,dx,
\]
where subscripts denote partial derivatives and repeated indices are summed.
\end{lemma}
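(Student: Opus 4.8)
The plan is to differentiate $M(t)$ directly using the equation \eqref{nls}, carefully tracking integration by parts. First I would write $M(t) = 2\Im\int \bar u\, u_j\, a_j\,dx$ (summation convention) and differentiate under the integral sign, using $\partial_t u = i\Delta u + i|u|^2 u$ from \eqref{nls}. This produces
\[
\tfrac{d}{dt}M(t) = 2\Im\int \bigl(\overline{\partial_t u}\, u_j + \bar u\, \partial_t u_j\bigr) a_j\,dx
= 2\Im\int \bigl(\overline{(i\Delta u + i|u|^2 u)}\, u_j + \bar u\, \partial_j(i\Delta u + i|u|^2 u)\bigr)a_j\,dx.
\]
Pulling out the factor of $i$ turns $\Im$ into $\Re$ (with a sign), so the whole computation reduces to evaluating $2\Re\int\bigl(-\overline{\Delta u}\,u_j a_j + \bar u\,\partial_j(\Delta u)\,a_j - \overline{|u|^2 u}\,u_j a_j + \bar u\,\partial_j(|u|^2 u)\,a_j\bigr)dx$, treating the linear (dispersive) terms and the nonlinear terms separately.

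For the dispersive terms, I would integrate by parts to move derivatives around until the expression is manifestly real and expressed in terms of $a$, its second derivatives $a_{jk}$, and $\Delta\Delta a$. The standard manipulation is: combine $-\overline{\Delta u}\,u_j a_j$ and $\bar u\,\partial_j(\Delta u)\,a_j$, integrate by parts in the second to land derivatives on $\bar u$ and on $a_j$; after symmetrizing one obtains the term $4\Re\, a_{jk}\bar u_j u_k$ together with a term where the Laplacian falls entirely on the weight, contributing $-\Delta\Delta a$ against $|u|^2$ (this last step uses two integrations by parts and the identity $\Delta(|u|^2) = 2\Re(\bar u\Delta u) + 2|\nabla u|^2$, the $|\nabla u|^2$ piece cancelling against another contribution). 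For the nonlinear terms, $-\overline{|u|^2u}\,u_j a_j + \bar u\,\partial_j(|u|^2u)a_j$; writing $\partial_j(|u|^2u) = |u|^2 u_j + u\,\partial_j(|u|^2)$ and using $\Re(\bar u u_j) = \tfrac12\partial_j(|u|^2)$, the $|u|^2u_j$ pieces combine and one integrates by parts once so that $\partial_j$ hits $a_j$, yielding $-|u|^4\Delta a$ up to the factor; the $u\,\partial_j(|u|^2)$ term is purely imaginary against $\bar u$ after taking $\Re$, or else combines into the same total derivative — either way it contributes only to the $-|u|^4\Delta a$ term.

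The main obstacle is purely bookkeeping: keeping the real/imaginary parts straight through several integrations by parts and making sure every boundary term vanishes (which is legitimate here since $u(t)\in H^1$ and, for the application, $a$ and its derivatives are bounded — though for the \emph{general} identity one should note it holds for Schwartz $u$ and then extends by density, or simply state it formally as the lemma does). There is no conceptual difficulty; the identity is a direct computation, and I would present it by grouping terms as above and citing the standard cancellations rather than writing out every line. Since the lemma is stated for a general smooth weight $a$, I would not worry about decay at this stage — the spatial truncation making the boundary terms rigorous is handled when the lemma is applied in the proof of Proposition~\ref{pee}.
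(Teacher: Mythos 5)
Your plan is correct and is exactly what the paper does: the paper offers no written proof of Lemma~\ref{morid} beyond the remark that it ``follows by computing directly using \eqref{nls},'' and your direct differentiation plus the standard integrations by parts (using $\Delta(|u|^2)=2\Re(\bar u\Delta u)+2|\nabla u|^2$ for the linear part) reproduces that computation and yields the stated identity. One small bookkeeping slip in the nonlinear term does not affect the conclusion: the $|u|^2 u_j$ pieces cancel outright rather than producing $-|u|^4\Delta a$, and it is the remaining $\Re(\bar u\cdot u\,\partial_j(|u|^2))=\tfrac12\partial_j(|u|^4)$ term (which is purely real, not imaginary) that integrates by parts to give $-|u|^4\Delta a$.
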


Let $R\gg1$ to be determined below.  We take $a$ to be a radial function satisfying
\[
a(x) = \begin{cases} |x|^2 & |x|\leq \tfrac{R}{2}, \\ R|x| & |x|>R.\end{cases} 
\]
In the intermediate region $\tfrac{R}{2}<|x|\leq R$, we impose that
\[
\partial_r a\geq 0, \quad \partial_r^2 a\geq 0,\quad |\partial^\alpha a(x)| \lesssim_\alpha R|x|^{-|\alpha|+1}\qtq{for} |\alpha|\geq 1. 
\]
Here $\partial_r$ denotes the radial derivative, i.e. $\partial_r a = \nabla a \cdot \tfrac{x}{|x|}$.  Under these conditions, the matrix $a_{jk}$ is non-negative.

Note that for $|x|\leq \tfrac{R}{2}$, we have
\[
a_{jk}=2\delta_{jk}, \quad \Delta a = 6,\quad \Delta\Delta a = 0, 
\]
while for $|x|>R$, we have
\[
a_{jk}=\tfrac{R}{|x|}\bigl[\delta_{jk}-\tfrac{x_j}{|x|}\tfrac{x_k}{|x|}\bigr],\quad \Delta a = \tfrac{2R}{|x|}, \quad\Delta\Delta a  = 0.
\]

\begin{proposition}[Virial/Morawetz estimate]\label{VM} Let $T>0$. For $R=R(\delta, M(u), Q)$ sufficiently large,
\[
\tfrac{1}{T}\int_0^T \int_{|x|\leq R} |u(t,x)|^4 \,dx \,dt \lesssim_{u,\delta} \tfrac{R}{T} + \tfrac{1}{R^2}. 
\]

\end{proposition}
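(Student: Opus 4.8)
The strategy is to apply the Morawetz identity (Lemma~\ref{morid}) with the weight $a$ described above, integrate in time over $[0,T]$, and extract from the main term a multiple of $\int_{|x|\le R}|u|^4\,dx$ while controlling all remaining contributions.  First I would record the bound on $M(t)$ itself: since $\nabla a = \mathcal{O}(R)$ (as $|\nabla a|\lesssim R|x|^{-0}$ on the core and $|\nabla a|\le R$ at large radii), Cauchy--Schwarz and the uniform $H^1$ bound give $|M(t)|\lesssim_u R$ uniformly in $t$.  Hence $\frac1T\int_0^T \frac{d}{dt}M(t)\,dt = \frac{M(T)-M(0)}{T} = \mathcal{O}_u(\frac RT)$, which is the source of the first term on the right-hand side.

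\emph{The main term.}  On $\{|x|\le R/2\}$ the integrand in $\frac{d}{dt}M(t)$ is exactly $-6|u|^4 + 8|\nabla u|^2$ (using $a_{jk}=2\delta_{jk}$, $\Delta a = 6$, $\Delta\Delta a = 0$), which I rewrite as $8(\|\nabla u\|^2 - \tfrac34|u|^4)$ restricted to the ball.  The point is to compare this with the truncated quantity $\|\chi_R u\|_{\dot H_x^1}^2 - \tfrac34\|\chi_R u\|_{L_x^4}^4$, which is bounded below by $\delta'\|\chi_R u\|_{L_x^4}^4 \gtrsim \delta'\int_{|x|\le R/2}|u|^4$ via Lemma~\ref{coercive-ball}.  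The discrepancy between $\int_{|x|\le R/2}(\|\nabla u\|^2-\tfrac34|u|^4)$ and the $\chi_R$-truncated version is supported in the annulus $\{R/2<|x|\le R\}$ and involves, through the identity \eqref{ibp}, terms like $\frac1{R^2}\int_{|x|\sim R}|u|^2 = \mathcal{O}(\frac{M(u)}{R^2})$ together with annulus contributions of $|\nabla u|^2$ and $|u|^4$.  So I would organize the computation as: (core term) $=$ (coercive truncated term) $+$ (annulus errors) $+$ $\mathcal O(R^{-2})$.

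\emph{The error terms.}  There are three families of errors, all supported in $\{|x|>R/2\}$: (i) the term $-|u|^4\Delta a$ in the annulus and exterior, where $\Delta a \lesssim R/|x|\lesssim 1$, so this is $\lesssim \int_{|x|>R/2}|u|^4$; (ii) the term $4\Re a_{jk}\bar u_j u_k$ in the annulus, bounded by $\lesssim \int_{R/2<|x|\le R}|\nabla u|^2$ using $|\partial^2 a|\lesssim 1$ there, and non-negative in the exterior (since $a_{jk}\ge 0$ there), so it only helps; (iii) the $\Delta\Delta a$ term, which is supported only in the annulus and obeys $|\Delta\Delta a|\lesssim R|x|^{-3}\lesssim R^{-2}$, giving $\mathcal O(R^{-2}M(u))$.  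The crucial estimate is to absorb (i) and (ii).  For (i), radial Sobolev embedding gives $\||x|u\|_{L_x^\infty}\lesssim_u 1$, hence $\int_{|x|>R/2}|u|^4 \lesssim \||x|u\|_{L_x^\infty}^2 \int_{|x|>R/2}\frac{|u|^2}{|x|^2}\lesssim_u R^{-2}M(u)$.  For (ii), the exterior is harmless, and in the annulus $\int_{R/2<|x|\le R}|\nabla u|^2$ — this is genuinely not small, and here is where one uses that the annulus contribution from the core comparison carries the \emph{right sign}: the kinetic energy on the annulus appears with a favorable sign when one keeps track of the full $-6|u|^4+8|\nabla u|^2$ structure matched against $8(\|\chi_R u\|_{\dot H^1}^2 - \tfrac34\|\chi_R u\|_{L^4}^4)$, so that after choosing the intermediate profile of $a$ with $\partial_r^2 a\ge 0$ the annular $|\nabla u|^2$ terms combine to something nonnegative up to an $\mathcal O(R^{-2})$ remainder.

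\emph{Conclusion and the obstacle.}  Putting it together: integrating $\frac{d}{dt}M(t)\ge 8\delta'\int_{|x|\le R/2}|u|^4 - \mathcal{O}_{u}(R^{-2})$ over $[0,T]$ and dividing by $T$, the left side is $\mathcal{O}_u(R/T)$ and the right side gives $\frac1T\int_0^T\int_{|x|\le R/2}|u|^4 \lesssim_{u,\delta} \frac RT + \frac1{R^2}$; enlarging $R$ to $R$ (vs.\ $R/2$) at the cost of the same exterior estimate yields the stated bound.  I expect the main obstacle to be the bookkeeping in the intermediate annulus $\{R/2<|x|\le R\}$: one must verify that, with the prescribed sign conditions $\partial_r a\ge 0$, $\partial_r^2 a\ge 0$ and the derivative bounds $|\partial^\alpha a|\lesssim R|x|^{-|\alpha|+1}$, the combination of the $a_{jk}\bar u_j u_k$ term (which is $\ge 0$ once $a_{jk}\ge 0$), the $|u|^4\Delta a$ term, and the mismatch from \eqref{ibp} really does reduce to a harmless $\mathcal O(R^{-2})$ plus the coercive core term — in other words, that the radial Sobolev bound is strong enough to kill every annular term that does not already have a good sign.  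Everything else (the $M(t)=\mathcal O(R)$ bound, the coercivity input, the $L^4$ exterior estimate) is routine given the lemmas already established.
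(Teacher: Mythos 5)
Your proposal is correct and is essentially the paper's own argument: the same truncated virial/Morawetz weight, the same $|M(t)|\lesssim_u R$ boundary bound, the same insertion of $\chi_R^2$ via \eqref{ibp} to bring in Lemma~\ref{coercive-ball} on the core, and the same use of the radial Sobolev embedding to reduce every exterior and annular $|u|^4$ term to $\mathcal{O}(R^{-2})$. The one point you flag as the main obstacle resolves exactly as you guess: for radial $u$ the Hessian term is $4\,\partial_r^2 a\,|\partial_r u|^2\geq 0$ on the annulus (and purely angular, hence vanishing, in the exterior), so once the profile of $a$ is chosen compatibly with $\chi_R$ the annular gradient contributions carry a favorable sign and the remaining annular terms are absorbed by the radial Sobolev bound.
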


\begin{proof} Choose $R=R(\delta,M(u),Q)$ as in Lemma~\ref{coercive-ball}.  We define the weight $a$ as above and define $M(t)$ as in Lemma~\ref{morid}.  Note that by Cauchy--Schwarz, the uniform $H^1$-bounds for $u$, and the choice of weight, we have
\[
\sup_{t\in\R} |M(t)|\lesssim_u R.
\]  

We compute
\begin{align}
\tfrac{d}{dt}M(t) & = 8\int_{|x|\leq \frac{R}{2}} |\nabla u|^2 - \tfrac34 |u|^4 \,dx \label{dt1} \\
&\quad + \int_{|x|>R}  - \tfrac{2R}{|x|}|u|^4 + \tfrac{4R}{|x|} |\slashed{\nabla} u|^2 \,dx \label{dt2} \\
&\quad  + \int_{\frac{R}{2}<|x|\leq R} 4\Re a_{jk}\bar u_j u_k +\mathcal{O}\bigl(\tfrac{R}{|x|}|u|^4 + \tfrac{R}{|x|^3}|u|^2\bigr) \,dx, \label{dt3}
\end{align} 
where $\slashed{\nabla}$ denotes the angular part of the derivative.  In fact, as $u$ is radial, this term is zero.  We consider \eqref{dt1} as the main term and \eqref{dt2} and \eqref{dt3} as error terms.  

In \eqref{dt1} we may insert $\chi_R^2$; using \eqref{ibp} as well, we then write
\[
\eqref{dt1} =8\bigl[\|\chi_R u\|_{\dot H_x^1}^2 - \tfrac34\|\chi_R u\|_{L_x^4}^4\bigr] +\int\mathcal{O}\bigl(\tfrac{1}{R^2}|u|^2\bigr)\,dx + \int\mathcal{O}\bigl(\chi_R^4 - \chi_R^2\bigr)|u|^4 \,dx. 
\]

We now apply the fundamental theorem of calculus on an interval $[0,T]$, discard positive terms, use Lemma~\ref{coercive-ball}, and rearrange.  This yields
\begin{align*}
\int_0^T \int \delta' |\chi_R u(t,x)|^4 \,dx\,dt \lesssim \sup_{t\in[0,T]}|M(t)| + \int_0^T \int_{|x|>R} |u(t,x)|^4 \,dx + \tfrac{T}{R^2}M(u).
\end{align*}
By the radial Sobolev embedding, we have
\[
\int_{|x|>R}|u(t,x)|^4 \,dx \lesssim \tfrac{1}{R^2} \|u\|_{L_t^\infty H_x^1}^2 M(u).
\]
Continuing from above, we deduce
\[
\tfrac{1}{T}\int_0^T\int_{|x|\leq R} |u(t,x)|^4\,dx\,dt \lesssim_{u,\delta} \tfrac{R}{T} + \tfrac{1}{R^2},
\]
as desired.\end{proof}

\begin{remark} The idea of patching together the virial weight and the standard Morawetz weight is originally due to Ogawa and Tsutsumi \cite{OgaTsu}.  If one worked only with a truncated virial weight, one would encounter the error term
\[
\int_{|x|>R} |\nabla u(t,x)|^2\,dx.
\]
If $u$ has a pre-compact orbit in $H^1$, this term can be made small uniformly for $t\in\R$.  Otherwise, it is not clear that this can be achieved.
\end{remark}

\begin{proof}[Proof of Proposition~\ref{pee}]  Applying Proposition~\ref{VM} with $T$ sufficiently large and $R=T^{1/3}$ implies
\[
\tfrac{1}{T}\int_0^{T}\int_{|x|\leq t^{1/3}} |u(t,x)|^4\,dx\,dt \lesssim T^{-2/3},
\]
which suffices to give the desired result. \end{proof}

\end{document}